\newtheorem{theorem}{Theorem}[section]
\newtheorem{lemma}[theorem]{Lemma}
\newtheorem{application}[theorem]{Application}
\theoremstyle{definition}
\theoremstyle{remark}
\newtheorem*{note*}{Note}
\numberwithin{equation}{section}
\newcommand{\rank}{\mathop{\operator@font rank}}
\newcommand{\conv}{\mathop{\operator@font conv}}
\newcommand{\vol}{\mathop{\operator@font vol}}
\newcommand{\onetagright}{\tagsleft@false}
\newcommand{\ls}{\leqslant}
\newcommand{\gr}{\geqslant}
\renewcommand{\epsilon}{\varepsilon}
\begin{document}
\small

\title{\bf Reverse Brascamp--Lieb inequality and the dual Bollob\'{a}s--Thomason inequality}

\author{Dimitris-Marios Liakopoulos}

\date{}

\maketitle

\begin{abstract}
\footnotesize We prove that if $f:{\mathbb R}^n\to [0,\infty )$ is an integrable log-concave function with $f(0)=1$
and $F_1,\ldots ,F_r$ are subspaces of ${\mathbb R}^n$ such that $sI_n=\sum_{i=1}^rc_iP_i$ where $I_n$ is the identity
operator and $P_i$ is the orthogonal projection onto $F_i$ then
$$n^n\int_{{\mathbb R}^n}f(y)^ndy\gr\prod_{i=1}^r\left (\int_{F_i}f(x_i)dx_i\right )^{c_i/s}.$$
As an application we obtain the dual version of the Bollob\'{a}s--Thomason inequality: if $K$ is a convex body in ${\mathbb R}^n$ with
$0\in {\rm int}(K)$ and $(\sigma_1,\ldots ,\sigma_r)$ is an $s$-uniform cover of $[n]$ then
$$|K|^s\gr\frac{1}{(n!)^s}\prod_{i=1}^r|\sigma_i|!\prod_{i=1}^r|K\cap F_i|.$$
This is a sharp generalization of Meyer's dual Loomis--Whitney inequality.
\end{abstract}

\section{Introduction}

The purpose of this article is to establish the dual version of the uniform cover inequality of Bollob\'{a}s and
Thomason. We fix an orthonormal basis $\{e_1,\ldots ,e_n\}$ of ${\mathbb R}^n$ and recall that the not necessarily
distinct non-empty sets $\sigma_1,\ldots ,\sigma_r\subseteq [n]:=\{1,\ldots ,n\}$
form an $s$-uniform cover of $[n]$ for some $s\gr 1$ if every $j\in [n]$ belongs to exactly $s$ of the sets $\sigma_i$.
The main result of \cite{Bollobas-Thomason-1995} estimates the volume of a compact set in terms of the volumes of its
coordinate projections that correspond to a uniform cover of $[n]$.

\begin{theorem}[Bollob\'{a}s-Thomason]\label{th:BT}Let $r\gr 1$ and $(\sigma_1,\ldots ,\sigma_r)$ be an
$s$-uniform cover of $[n]$. For every compact subset $K$ of ${\mathbb R}^n$, which is the closure
of its interior, we have
\begin{equation}\label{eq:intro-BT-1}|K|^s\ls\prod_{i=1}^r|P_{F_{\sigma_i}}(K)|,\end{equation}
where $F_{\tau }={\rm span}\{e_j:j\in\tau \}$ and $P_F$ denotes the orthogonal projection of ${\mathbb R}^n$
onto $F$.
\end{theorem}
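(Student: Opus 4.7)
The plan is to deduce the inequality from the geometric form of the Brascamp--Lieb inequality. First I would observe that the $s$-uniform cover condition translates into a decomposition of the identity: since every coordinate $j\in [n]$ belongs to exactly $s$ of the sets $\sigma_i$, the matrix $\sum_{i=1}^r P_{F_{\sigma_i}}$ is diagonal with every diagonal entry equal to $s$, hence
$$\sum_{i=1}^r\frac{1}{s}P_{F_{\sigma_i}}=I_n.$$
This places us exactly in the setting of the geometric Brascamp--Lieb inequality (of Ball and Barthe), with subspaces $F_{\sigma_i}$, projections $P_i=P_{F_{\sigma_i}}$, and weights $c_i=1/s$.

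Next I would apply Brascamp--Lieb to the indicator functions $f_i:=\chi_{P_{F_{\sigma_i}}(K)}$, which yields
$$\int_{{\mathbb R}^n}\prod_{i=1}^r f_i\bigl(P_{F_{\sigma_i}}(x)\bigr)^{1/s}\,dx\ls\prod_{i=1}^r\Bigl(\int_{F_{\sigma_i}}f_i\Bigr)^{1/s}=\prod_{i=1}^r|P_{F_{\sigma_i}}(K)|^{1/s}.$$
For any $x\in K$ one has $P_{F_{\sigma_i}}(x)\in P_{F_{\sigma_i}}(K)$ for every $i$, so the integrand equals $1$ on $K$ and the left-hand side is at least $|K|$. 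Raising both sides to the power $s$ gives the stated inequality.

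The main obstacle is the appeal to the geometric Brascamp--Lieb inequality itself, which is a non-trivial tool. In the special case of coordinate projections considered here, however, the required statement reduces via Fubini's theorem and iterated use of the generalized H\"older inequality to a Loomis--Whitney-type estimate. This gives an elementary route that avoids the Brascamp--Lieb black box and mirrors the original inductive argument of Bollob\'{a}s and Thomason.
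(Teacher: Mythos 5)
Your argument is exactly the paper's: rewrite the $s$-uniform cover condition as $I_n=\sum_i\frac{1}{s}P_{F_{\sigma_i}}$, apply the geometric Brascamp--Lieb inequality to the indicator functions $\mathbf{1}_{P_{F_{\sigma_i}}(K)}$, and note that the integrand dominates $\mathbf{1}_K$. The paper packages this as the more general Theorem~\ref{th:general-uc-1} (arbitrary weights $c_i$) and then specializes with $c_i=1$ in Application~\ref{appl:Bollobas-Thomason}, but the substance is identical.
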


Throughout this article, for any non-empty compact set in ${\mathbb R}^n$ we write $|A|$ for the volume of $A$ in the
affine subspace ${\rm aff}(A)$. A special case of Theorem \ref{th:BT} is the Loomis--Whitney inequality \cite{Loomis-Whitney-1949}; one has
\begin{equation}\label{eq:intro-BT-2}|K|^{n-1}\ls\prod_{i=1}^n|P_i(K)|\end{equation}
where $P_i:=P_{e_i^{\perp }}$, and equality holds if and only if $K$ is a coordinate box,
i.e. a rectangular parallelepiped whose sides are parallel to the coordinate axes. This follows from
the observation that the sets $\sigma_i=[n]\setminus\{i\}$ form an $(n-1)$-uniform cover of $[n]$.

Meyer proved in \cite{Meyer-1988} an inequality which is dual to the Loomis--Whitney inequality. If $K$ is a convex body
in ${\mathbb R}^n$ then
\begin{equation}\label{eq:intro-BT-3}|K|^{n-1}\gr \frac{n!}{n^n}\prod_{i=1}^n|K\cap e_i^{\perp }|,\end{equation}
where $K\cap F$ denotes the section of $K$ with a subspace $F$. Equality holds in \eqref{eq:intro-BT-3}
if and only if $K={\rm conv}(\{\pm \lambda_1e_1,\ldots ,\pm \lambda_ne_n\})$ for some $\lambda_i>0$.
We prove the following dual Bollob\'{a}s--Thomason inequality.

\begin{theorem}\label{th:dual-BT}Let $K$ be a convex body in ${\mathbb R}^n$ with $0\in {\rm int}(K)$ and $(\sigma_1,\ldots ,\sigma_r)$ be an $s$-uniform cover
of $[n]$. Then,
\begin{equation}\label{eq:intro-BT-4}|K|^s\gr\frac{1}{(n!)^s}\prod_{i=1}^r|\sigma_i|!\prod_{i=1}^r|K\cap F_{\sigma_i}|.\end{equation}
\end{theorem}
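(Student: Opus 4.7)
The plan is to derive Theorem \ref{th:dual-BT} directly from the reverse Brascamp--Lieb inequality stated in the abstract by applying it to a suitable log-concave function built from the Minkowski functional of $K$.

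First I would set $f(x) := e^{-\|x\|_K}$, where $\|\cdot\|_K$ is the Minkowski gauge of $K$. Since $0 \in {\rm int}(K)$, the gauge is a finite, positively homogeneous and convex function, so $f$ is log-concave with $f(0)=1$, as required by the hypothesis of the functional inequality. For the subspaces I would take $F_i := F_{\sigma_i}$ and weights $c_i = 1$; since $(\sigma_1,\ldots,\sigma_r)$ is an $s$-uniform cover of $[n]$, for every basis vector $e_j$ one has $\sum_{i=1}^r P_{F_{\sigma_i}} e_j = |\{i : j \in \sigma_i\}|\,e_j = s\,e_j$, so $\sum_i c_i P_i = sI_n$ and the hypothesis on the projections is met.

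Next I would compute the two sides. For the left-hand side, polar integration in $\R^n$ with respect to the gauge of $K$ gives
\[
\int_{\R^n} f(y)^n\,dy \;=\; \int_{\R^n} e^{-n\|y\|_K}\,dy \;=\; n|K|\int_0^\infty r^{n-1}e^{-nr}\,dr \;=\; \frac{n!}{n^n}\,|K|,
\]
so $n^n\int_{\R^n} f^n = n!\,|K|$. For each factor on the right-hand side, the key observation is that the restriction of $\|\cdot\|_K$ to $F_i$ equals the Minkowski functional of the convex body $K\cap F_i$ inside $F_i$ (this is immediate since $0\in{\rm int}(K\cap F_i)$ inside $F_i$). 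Applying the same polar computation in dimension $|\sigma_i|$,
\[
\int_{F_i} f(x_i)\,dx_i \;=\; \int_{F_i} e^{-\|x_i\|_{K\cap F_i}}\,dx_i \;=\; |\sigma_i|!\,|K\cap F_i|.
\]

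Plugging these two computations into the reverse Brascamp--Lieb inequality yields
\[
n!\,|K| \;\gr\; \prod_{i=1}^r \bigl(|\sigma_i|!\,|K\cap F_i|\bigr)^{1/s},
\]
and raising both sides to the $s$-th power gives exactly \eqref{eq:intro-BT-4}. There is essentially no hard step in this reduction once the functional inequality is available; the only points requiring care are the identification of $\|\cdot\|_K|_{F_i}$ with the gauge of $K\cap F_i$, and checking that the $s$-uniform cover condition translates to the projection identity $\sum_i P_{F_{\sigma_i}} = sI_n$ with unit weights. As a sanity check, specializing to $\sigma_i = [n]\setminus\{i\}$ (so $r=n$, $s=n-1$) one recovers Meyer's dual Loomis--Whitney inequality \eqref{eq:intro-BT-3} with the correct constant $n!/n^n$.
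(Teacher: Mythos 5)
Your proposal is correct and follows essentially the same route as the paper: the paper proves a slightly more general statement (Theorem~\ref{th:dual-2}, with arbitrary subspaces $F_i$ and weights $c_i$) by applying Theorem~\ref{th:dual-1} to $f=e^{-\|\cdot\|_K}$ and using exactly the two polar-integration identities $n^n\int_{\R^n}f^n=n!\,|K|$ and $\int_{F_i}f=d_i!\,|K\cap F_i|$, then specializes to coordinate subspaces with unit weights in Application~\ref{appl:dual-BT-1}. Your direct argument is the same computation, just without the intermediate general theorem.
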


It is  not hard to check that \eqref{eq:intro-BT-4} is sharp; it becomes equality for any $s$-uniform cover of $[n]$ if $K$ is the
cross-polytope $B_1^n={\rm conv}(\{\pm e_1,\ldots ,\pm e_n\})$.

An essentially equivalent way to state Theorem \ref{th:BT} (see \cite{Bollobas-Thomason-1995}) is the fact that for every
compact subset $K$ of ${\mathbb R}^n$, which is the closure of its interior, we can find a coordinate box such that $|B|=|K|$ and
\begin{equation}\label{eq:BT-equivalent}|P_{F_{\sigma }}(B)|\ls |P_{F_{\sigma }}(K)|\end{equation}
for every $\sigma\subseteq [n]$. Theorem \ref{th:dual-BT} has a similar equivalent formulation.

\begin{theorem}\label{th:dual-BT-equivalent}Let $K$ be a convex body in ${\mathbb R}^n$ with $0\in {\rm int}(K)$.
There exists an affine cross-polytope $C={\rm conv}(\{\pm \lambda_1e_1,\ldots ,\pm \lambda_ne_n\})$, where $\lambda_i>0$, such that
$|C|=|K|$ and $|C\cap F_{\sigma }|\gr |K\cap F_{\sigma }|$ for every $\sigma\subseteq [n]$.
\end{theorem}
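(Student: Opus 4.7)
The plan is to reduce the existence of $C$ to a linear program in $n$ variables whose LP dual is precisely a weighted form of Theorem \ref{th:dual-BT}. For any $\lambda_1,\ldots ,\lambda_n>0$, the cross-polytope $C=\conv\{\pm\lambda_1e_1,\ldots ,\pm\lambda_ne_n\}$ satisfies $|C|=\frac{2^n}{n!}\prod_{i=1}^n\lambda_i$ and $|C\cap F_\sigma |=\frac{2^{|\sigma |}}{|\sigma |!}\prod_{i\in\sigma }\lambda_i$ for every nonempty $\sigma\subseteq [n]$. Writing $t_i:=\log(2\lambda_i)$ and $a_\sigma:=\log\bigl(|\sigma |!\,|K\cap F_\sigma |\bigr)$, the conclusion becomes: find reals $t_1,\ldots ,t_n$ with
\[\sum_{i=1}^n t_i=\log(n!\,|K|)\qquad\text{and}\qquad\sum_{i\in\sigma }t_i\gr a_\sigma\ \text{ for every nonempty }\sigma\subseteq [n].\]

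I would first set up the linear program that minimizes $\sum_{i=1}^n t_i$ over $t\in\R^n$ (the variables being free) subject to the $\gr$-inequalities above. Its LP dual is
\[\max\Bigl\{\sum_\sigma a_\sigma y_\sigma:\ y_\sigma\gr 0,\ \sum_{\sigma\ni i}y_\sigma=1\text{ for every }i\in[n]\Bigr\},\]
whose feasible set is the nonempty, bounded, rational polytope of fractional $1$-covers of $[n]$; strong duality therefore applies and both optima are finite. If I can show that the dual maximum is at most $\log(n!\,|K|)$, then some primal-feasible $t^*$ satisfies $\sum t_i^*\ls\log(n!\,|K|)$, and adding the non-negative constant $\frac{1}{n}\bigl(\log(n!\,|K|)-\sum t_i^*\bigr)$ to every coordinate turns this into an equality while preserving the other constraints. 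Then $\lambda_i:=\frac{1}{2}e^{t_i^*}$ delivers the cross-polytope $C$ required by the theorem.

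The substantive step is bounding the dual objective. Rational points are dense in the dual feasible polytope and the objective is continuous, so it suffices to treat rational $y$. After clearing a common denominator $s\in\N$, the non-negative integers $(sy_\sigma )_\sigma$ are the multiplicities of an $s$-uniform cover of $[n]$ in the sense of Theorem \ref{th:dual-BT}. Applying that theorem to this cover,
\[|K|^s\gr\frac{1}{(n!)^s}\prod_{\sigma\subseteq [n]}\bigl(|\sigma |!\,|K\cap F_\sigma |\bigr)^{sy_\sigma }=\frac{1}{(n!)^s}\exp\!\Bigl(s\sum_\sigma a_\sigma y_\sigma\Bigr),\]
and taking logarithms yields $\sum_\sigma a_\sigma y_\sigma\ls\log(n!\,|K|)$, as required.

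The main obstacle is making the LP-duality passage rigorous: one must invoke Theorem \ref{th:dual-BT} for \emph{every} integer uniform cover, not only canonical ones such as the one producing Meyer's inequality, because the LP dual sweeps across every vertex of the fractional $1$-cover polytope. Granted the full strength of Theorem \ref{th:dual-BT}, the LP optimum hands back the $\lambda_i$ automatically, and the resulting $C$ realises both $|C|=|K|$ and $|C\cap F_\sigma |\gr |K\cap F_\sigma |$ for every $\sigma\subseteq [n]$.
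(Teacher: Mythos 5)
Your argument is correct and reaches the same target configuration as the paper (a cross-polytope with $\lambda_i=\tfrac12 e^{t_i^*}$), but the route is genuinely different. The paper mimics the Bollob\'as--Thomason primal argument: it introduces auxiliary quantities $t_\sigma$ for \emph{every} $\sigma\subseteq[n]$, takes them maximal subject to a finite family of product inequalities coming from Theorem~\ref{th:dual-BT} and from irreducible uniform covers, and then extracts the tight constraints by hand to deduce $\prod_i t_{\{i\}}=n!|K|$ and $t_\tau=\prod_{i\in\tau}t_{\{i\}}$. You instead parametrize only by the $n$ singleton weights $t_i=\log(2\lambda_i)$, observe that the required conclusion is exactly primal feasibility at the value $\log(n!|K|)$ for a linear program whose dual feasible region is the fractional $1$-cover polytope, and bound the dual objective uniformly via Theorem~\ref{th:dual-BT} after clearing denominators. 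The LP formulation is more streamlined: it replaces the maximality/tightness bookkeeping and the explicit appeal to the finiteness of irreducible covers with a single strong-duality invocation, and the density-of-rational-points step is exactly where the need for Theorem~\ref{th:dual-BT} for \emph{arbitrary} integer uniform covers (with multiplicity) enters --- the same full strength the paper also relies on. One minor redundancy: since $\sigma=[n]$ is itself a primal constraint with right-hand side $a_{[n]}=\log(n!|K|)$, the primal optimum is already forced to equal $\log(n!|K|)$, so the coordinate-shift at the end is unnecessary (though harmless). A small technical point worth stating explicitly is that the dual polytope is nonempty (e.g.\ $y_{[n]}=1$) and bounded ($0\ls y_\sigma\ls 1$), so strong duality applies and the maximum is attained at a rational vertex, making the density argument rigorous.
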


Theorem \ref{th:dual-BT}, and its equivalent version Theorem \ref{th:dual-BT-equivalent}, is a consequence of a functional inequality
which is proved in Section~\ref{sec:dual-BT}. We denote by ${\cal F}({\mathbb R}^n)$
the class of log-concave integrable functions $f:{\mathbb R}^n\to [0,\infty )$.

\begin{theorem}\label{th:dual-functional}Let $f\in {\cal F}({\mathbb R}^n)$ with $f(0)=1$ and $(\sigma_1,\ldots ,\sigma_r)$ be an $s$-uniform cover
of $[n]$. Then,
$$n^n\int_{{\mathbb R}^n}f(y)^ndy\gr\prod_{i=1}^r\left (\int_{F_i}f(x_i)dx_i\right )^{1/s}.$$
\end{theorem}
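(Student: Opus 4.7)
The strategy is to combine Barthe's reverse Brascamp--Lieb inequality for projections with a monotonicity property of the map $p\mapsto p^n\int f^p$ that is special to log-concave densities normalised by $f(0)=1$. A direct application of reverse Brascamp--Lieb will produce the exponent $r/s$ on the left rather than $n$; supplying the extra monotonicity is what closes the gap, and it is precisely here that $f(0)=1$ is used in an essential way.

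The $s$-uniform cover hypothesis translates into $\sum_{i=1}^r P_{F_{\sigma_i}}=sI_n$, equivalently $I_n=\sum_{i=1}^r(1/s)P_{F_{\sigma_i}}$, which is exactly the input needed for the geometric case of Barthe's reverse Brascamp--Lieb inequality, whose optimal constant is $1$. Applying it with weights $c_i=1/s$ and with the restrictions $g_i:=f|_{F_{\sigma_i}}$ yields
$$\int_{\R^n}\sup\Bigl\{\prod_{i=1}^r f(x_i)^{1/s}\,:\,y=\sum_{i=1}^r\tfrac{1}{s}x_i,\;x_i\in F_{\sigma_i}\Bigr\}\,dy\;\gr\;\prod_{i=1}^r\Bigl(\int_{F_{\sigma_i}}f\Bigr)^{1/s}.$$
For any admissible decomposition of $y$ set $\bar x:=\frac{1}{r}\sum_i x_i$, so that $\bar x=(s/r)y$; log-concavity of $f$ together with $f(0)=1$ give $f(\bar x)\gr\prod_i f(x_i)^{1/r}$, which rewrites as the pointwise bound $\prod_i f(x_i)^{1/s}\ls f((s/r)y)^{r/s}$. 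Substituting $u=(s/r)y$ in the resulting integral produces
$$\Bigl(\frac{r}{s}\Bigr)^n\int_{\R^n}f^{r/s}\;\gr\;\prod_{i=1}^r\Bigl(\int_{F_{\sigma_i}}f\Bigr)^{1/s}.$$

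It remains to upgrade the exponent from $r/s$ to $n$. Writing $f=e^{-V}$ with $V:\R^n\to[0,\infty]$ convex and $V(0)=0$, a change of variables gives $p^n\int f^p\,dx=\int e^{-pV(y/p)}\,dy$. Convexity of $V$ applied at $0$ and $y/p$ yields $V(ty/p)\ls tV(y/p)$ for $t\in[0,1]$; taking $t=p/p'$ with $p'>p$ shows that $p\mapsto pV(y/p)$ is nonincreasing on $(0,\infty)$, and hence $p\mapsto p^n\int f^p$ is nondecreasing. Since $\sum_{i=1}^r|\sigma_i|=sn$ with each $|\sigma_i|\gr 1$, one has $r\ls sn$, i.e.\ $r/s\ls n$, so $(r/s)^n\int f^{r/s}\ls n^n\int f^n$ and Theorem~\ref{th:dual-functional} follows. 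The main obstacle is precisely this exponent mismatch between the output of Barthe's inequality and the target; the monotonicity of $p^n\int f^p$ for log-concave $f$ normalised at the origin, a consequence of the elementary ray-subadditivity of convex functions vanishing at $0$, is the bridge.
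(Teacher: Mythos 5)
Your proof is correct, but it follows a genuinely different route from the paper's. The paper (in its proof of the more general Theorem~\ref{th:dual-1}, of which Theorem~\ref{th:dual-functional} is the special case $c_i=1$) produces the exponent $n$ \emph{inside} the pointwise bound: it rewrites $z/n=\sum_i \frac{d_i}{sn}\cdot\frac{x_i}{d_i}$ using the trace identity $\sum d_i = sn$, applies log-concavity to that convex combination, and then uses $f(0)=1$ in the form $f(x_i/d_i)\gr f(x_i)^{1/d_i}$ to obtain $f(z/n)^n\gr\prod_i f(x_i)^{1/s}$; a single change of variables $y=z/n$ after Barthe's reverse Brascamp--Lieb inequality then gives $n^n\int f^n$ directly. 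You instead take the equal-weight average $\bar x=\frac{1}{r}\sum x_i$, which produces the ``natural'' exponent $r/s$ and the intermediate inequality $(r/s)^n\int f^{r/s}\gr\prod_i(\int_{F_i}f)^{1/s}$, and then close the gap with the monotonicity of $p\mapsto p^n\int f^p$ for log-concave $f$ with $f(0)=1$ together with $r/s\ls n$ (which comes from $\sum|\sigma_i|=sn$ and $|\sigma_i|\gr 1$). The two approaches are organized quite differently: the paper folds all the normalization into one clever pointwise estimate, whereas yours is modular, isolating the role of $f(0)=1$ into a self-contained monotonicity lemma and also exhibiting the stronger intermediate bound with exponent $r/s$ (which, notably, holds for any log-concave integrable $f$, without the normalization $f(0)=1$). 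Each version has its merits; both generalize to arbitrary weights $c_i$ along the same lines, since $\sum_i c_i\ls\sum_i c_id_i=sn$.

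Two small inaccuracies in your write-up, neither of which affects the argument: the inequality $f(\bar x)\gr\prod_i f(x_i)^{1/r}$ is a consequence of log-concavity alone and does not use $f(0)=1$ (that hypothesis enters only in the monotonicity step); and the codomain $[0,\infty]$ for $V=-\log f$ is too restrictive, since $f$ need not attain its maximum at the origin -- you only need $V$ convex with $V(0)=0$, which suffices for the inequality $V(ty/p)\ls tV(y/p)$.
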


Moreover, we obtain more general inequalities which imply several of the known extensions of the Loomis--Whitney and Meyer inequalities; see
Section~\ref{sec:BT} and Section~\ref{sec:dual-BT} for the statements and details. Our main tool is Barthe's multidimensional generalization
of Ball's geometric Brascamp-Lieb inequality (see \cite{Ball-handbook}) and its reverse form; see \cite[Theorem~6]{Barthe-1998}. The connection with the problems that
we discuss in Section \ref{sec:BT} was communicated by F.~Barthe to A.~Giannopoulos after a talk in MSRI and the author is grateful to them for the information which
has been the starting point for this work.

Let us also mention that the Bollob\'{a}s--Thomason inequality plays a key role in the recent work \cite{BGL} of S.~Brazitikos,
A.~Giannopoulos and the author that provides local versions of the Loomis--Whitney inequality for coordinate projections
of convex bodies; see also \cite{Alonso-Artstein} for further results in this direction. It is conceivable that one might exploit
the dual inequality of Theorem \ref{th:dual-BT} to obtain analogous local inequalities for sections. Isomorphic inequalities
of this type appear in \cite{BGL} where they are proved by different methods.

In Section \ref{sec:BT} we describe the way one can derive both the Loomis--Whitney and the Bollob\'{a}s--Thomason inequality, as well
as other extensions of them, as consequences of the multidimensional geometric Brascamp--Lieb inequality. The main new results of this
work are presented in Section \ref{sec:dual-BT}; the main tool is Barthe's inequality. We refer to the books \cite{Schneider-book} and
\cite{AGA-book} for standard notation and facts from convex geometric analysis.

\section{Brascamp-Lieb inequality and uniform cover inequalities}\label{sec:BT}

In what follows we say that the subspaces $F_1,\ldots ,F_r$ form an $s$-uniform cover of ${\mathbb R}^n$ with weights $c_1,\ldots ,c_r>0$ for some $s>0$ if
\begin{equation}\label{eq:uniform-weights}sI_n=\sum_{i=1}^rc_iP_i,\end{equation}
where $I_n$ is the identity operator and $P_i$ is the orthogonal projection of ${\mathbb R}^n$ onto $F_i$. We prove the next general result.

\begin{theorem}\label{th:general-uc-1}Let $F_1,\ldots ,F_r$ be subspaces that form an $s$-uniform cover of ${\mathbb R}^n$
with weights $c_1,\ldots ,c_r>0$. For every compact subset $K$ of ${\mathbb R}^n$ we have
\begin{equation}|K|^s\ls\prod_{i=1}^r|P_{F_i}(K)|^{c_i}.\end{equation}
\end{theorem}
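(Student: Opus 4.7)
The plan is to deduce Theorem~\ref{th:general-uc-1} directly from the geometric Brascamp--Lieb inequality in the multidimensional form of Ball and Barthe, applied to indicator functions. This is the precise analogue on the ``primal'' side of what the paper will do with Barthe's reverse inequality on the ``dual'' side.

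First, I would rescale the uniform cover identity. Setting $d_i := c_i/s$, the hypothesis $sI_n=\sum_{i=1}^r c_i P_i$ becomes
\begin{equation*}
I_n=\sum_{i=1}^r d_i P_i.
\end{equation*}
View each orthogonal projection as a surjection $B_i:\mathbb{R}^n\to F_i$; since $P_i$ is self-adjoint and idempotent, $B_i^{\ast}B_i=P_i$, so the identity above reads $\sum_i d_i B_i^{\ast}B_i=I_n$. This is exactly the hypothesis required by Ball--Barthe's geometric Brascamp--Lieb inequality: for any non-negative integrable $g_i:F_i\to[0,\infty)$,
\begin{equation*}
\int_{\mathbb{R}^n}\prod_{i=1}^r g_i(B_i x)^{d_i}\,dx\le \prod_{i=1}^r\left(\int_{F_i}g_i\right)^{d_i}.
\end{equation*}

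Next I would specialize the functions to indicators. Take $g_i=\mathbf{1}_{P_{F_i}(K)}$. If $x\in K$, then $B_i x=P_i x\in P_{F_i}(K)$ for every $i$, hence $g_i(B_i x)=1$; therefore
\begin{equation*}
\mathbf{1}_K(x)\le \prod_{i=1}^r \mathbf{1}_{P_{F_i}(K)}(B_i x)^{d_i}.
\end{equation*}
Integrating and applying the Brascamp--Lieb inequality yields
\begin{equation*}
|K|\le \int_{\mathbb{R}^n}\prod_{i=1}^r \mathbf{1}_{P_{F_i}(K)}(B_i x)^{d_i}\,dx\le \prod_{i=1}^r |P_{F_i}(K)|^{d_i}=\prod_{i=1}^r |P_{F_i}(K)|^{c_i/s}.
\end{equation*}
Raising to the $s$-th power gives the claimed inequality.

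There is essentially no obstacle: the whole argument is a direct application of Ball--Barthe to indicators, and the only thing to verify carefully is the bookkeeping that $\sum c_i P_i=sI_n$ is the correct normalization after dividing by $s$. Once Theorem~\ref{th:BT} is to be recovered as a corollary, one checks that an $s$-uniform cover $(\sigma_1,\ldots,\sigma_r)$ of $[n]$ yields subspaces $F_{\sigma_i}$ for which $\sum_i P_{F_{\sigma_i}}=sI_n$ (each coordinate direction is covered exactly $s$ times), so the hypothesis holds with weights $c_i=1$.
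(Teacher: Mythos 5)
Your proposal is correct and follows essentially the same route as the paper: rescale the weights by $s$ to put the cover identity in the normalized form $I_n=\sum_i (c_i/s)P_i$, apply Barthe's multidimensional geometric Brascamp--Lieb inequality (Theorem~\ref{th:barthe}, inequality \eqref{eq:barthe-1}) to the indicator functions $\mathbf{1}_{P_{F_i}(K)}$, and use the pointwise bound $\mathbf{1}_K\le\prod_i \mathbf{1}_{P_{F_i}(K)}^{c_i/s}\circ P_i$.
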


The proof is an almost direct consequence of Barthe's multidimensional geometric Brascamp-Lieb inequality. The statement is given below; the
reverse inequality \eqref{eq:barthe-2} will be our main tool in the next section.

\begin{theorem}[Barthe]\label{th:barthe}Let $r,n\in {\mathbb N}$. For $i=1,\ldots r$, let $F_i$ be a $d_i$-dimensional subspace
of ${\mathbb R}^n$ and $P_i$ be the orthogonal projection onto $F_i$. If
\begin{equation*}I_n=\sum_{i=1}^rc_iP_i\end{equation*}
for some $c_1,\ldots ,c_r>0$ then for all non-negative integrable functions $f_i:F_i\to {\mathbb R}$ we have
\begin{equation}\label{eq:barthe-1}\int_{{\mathbb R}^n}\prod_{i=1}^rf_i^{c_i}(P_ix)\,dx\ls \prod_{i=1}^r\left (\int_{F_i}f_i\right )^{c_i}\end{equation}
and
\begin{equation}\label{eq:barthe-2}\int_{{\mathbb R}^n}^{\ast }\sup\left\{ \prod_{i=1}^rf_i^{c_i}(x_i):x=\sum_{i=1}^rc_ix_i, x_i\in F_i\right\}\,dx
\gr \prod_{i=1}^r\left (\int_{F_i}f_i\right )^{c_i}.\end{equation}
\end{theorem}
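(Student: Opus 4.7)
The plan is to prove both inequalities by mass transportation. By homogeneity I may assume $\int_{F_i}f_i=1$ for all $i$, so both right-hand sides equal $1$. On each $F_i$ I put the standard Gaussian reference $\gamma_i(y)=e^{-\pi|y|^2}$. The uniform-cover hypothesis $\sum c_iP_i=I_n$ gives the telescoping identity
$$\prod_{i=1}^r\gamma_i^{c_i}(P_ix)=\exp\Bigl(-\pi\sum_ic_i|P_ix|^2\Bigr)=e^{-\pi|x|^2},$$
which integrates to $1$ on $\mathbb{R}^n$ and singles out $\gamma$ as the right reference. Let $T_i=\nabla\varphi_i:F_i\to F_i$ be the Brenier map pushing $\gamma_i$ onto $f_i$, so that $\varphi_i$ is convex and $\gamma_i(y)=f_i(T_iy)\det DT_i(y)$ a.e. The master map for the reverse inequality is
$$\Theta(x):=\sum_{i=1}^rc_iT_i(P_ix)=\nabla\Phi(x),\qquad\Phi(x):=\sum_ic_i\varphi_i(P_ix),$$
which is monotone and a.e.\ injective as the gradient of a convex function, with $D\Theta(x)=\sum_ic_iP_iDT_i(P_ix)P_i$ symmetric and positive semi-definite. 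For the forward direction I run the same construction with $S_i:F_i\to F_i$ pushing $f_i$ onto $\gamma_i$ and the corresponding map $\Sigma(x)=\sum_ic_iS_i(P_ix)$.

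The technical core of the argument is the pointwise matrix inequality
$$\det\Bigl(\sum_ic_iP_iM_iP_i\Bigr)\ge\prod_i\det\nolimits_{F_i}(M_i)^{c_i}$$
for PSD operators $M_i:F_i\to F_i$, valid precisely because $\sum c_iP_i=I_n$. I expect this matrix AM--GM principle to be the main obstacle; it can be reduced by simultaneous diagonalization to a scalar Brascamp--Lieb-type bound, or handled via the concavity of $\det^{1/n}$ on the PSD cone. A secondary ``partial-isometry'' estimate is also needed: for $w_i\in F_i$,
$$\sum_ic_i|w_i|^2\ge\Bigl|\sum_ic_iw_i\Bigr|^2,$$
which follows from $\sum c_iP_i=I_n$ by observing that the map $(w_i)\mapsto\sum_i\sqrt{c_i}\,w_i:\bigoplus_iF_i\to\mathbb{R}^n$ has operator norm one.

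For the reverse inequality \eqref{eq:barthe-2}, at every $x$ the choice $x_i:=T_i(P_ix)\in F_i$ gives a valid decomposition $\Theta(x)=\sum_ic_ix_i$, so the supremum in the integrand at $y=\Theta(x)$ is at least $\prod_if_i^{c_i}(T_iP_ix)$. The transport identity yields $f_i(T_iP_ix)=\gamma_i(P_ix)/\det DT_i(P_ix)$, and combining with the determinant inequality gives the pointwise lower bound
$$\prod_if_i^{c_i}(T_iP_ix)\,\det D\Theta(x)\ge\prod_i\gamma_i^{c_i}(P_ix)=e^{-\pi|x|^2}.$$
Passing to the upper integral on the left and applying the change of variables $y=\Theta(x)$, valid a.e.\ by monotonicity of $\Theta$, then produces $\int^{\ast}_{\mathbb{R}^n}\sup\{\cdots\}\,dy\ge\int e^{-\pi|x|^2}dx=1$. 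For the forward inequality \eqref{eq:barthe-1}, the dual transport identity $f_i(y)=\gamma_i(S_iy)\det DS_i(y)$ yields $\prod_if_i^{c_i}(P_ix)=e^{-\pi\sum_ic_i|S_iP_ix|^2}\prod_i(\det DS_i(P_ix))^{c_i}$; the partial-isometry estimate applied to $w_i=S_iP_ix$ and the determinant inequality together dominate this expression by $e^{-\pi|\Sigma(x)|^2}\det D\Sigma(x)$. Integrating and changing variables $y=\Sigma(x)$ then bounds the left-hand side by $\int e^{-\pi|y|^2}dy=1$, completing the proof.
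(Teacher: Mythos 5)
The paper does not prove this theorem at all: it is quoted verbatim from Barthe \cite[Theorem~6]{Barthe-1998} and used as a black box, so there is no internal proof to compare against. What you have written is, in outline, a faithful reconstruction of Barthe's own mass-transportation proof: Gaussian reference densities $\gamma_i$ normalized so that $\prod_i\gamma_i^{c_i}(P_ix)=e^{-\pi|x|^2}$, Brenier maps $T_i$ (resp.\ $S_i$) on each $F_i$, the gradient-of-convex-function maps $\Theta$ and $\Sigma$, the Monge--Amp\`ere identities, and the one-sided change of variables for gradients of convex functions (McCann's lemma, which takes care of a.e.\ Alexandrov differentiability and essential injectivity). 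The partial-isometry estimate $\sum_ic_i|w_i|^2\ge|\sum_ic_iw_i|^2$ is correct and proved exactly as you indicate, and the way you feed the two pointwise inequalities into the two directions of the change of variables is the right architecture for both \eqref{eq:barthe-1} and \eqref{eq:barthe-2}.

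The one genuine gap is the step you yourself flag as the main obstacle: the determinant inequality $\det\bigl(\sum_ic_iP_iM_iP_i\bigr)\ge\prod_i\det_{F_i}(M_i)^{c_i}$. The inequality is true (it is the multidimensional Ball--Barthe lemma, Barthe's key proposition), but neither of your two suggested routes establishes it. Simultaneous diagonalization is unavailable: for $r\ge 3$ the operators $P_iM_iP_i$ do not commute in general, and even in the rank-one case the $u_i\otimes u_i$ cannot be simultaneously diagonalized. Concavity of $\det^{1/n}$ (Minkowski's determinant inequality) gives $\det\bigl(\sum_ic_iP_iM_iP_i\bigr)^{1/n}\ge\sum_ic_i\det(P_iM_iP_i)^{1/n}$, which is vacuous because each $P_iM_iP_i$ is singular as an operator on ${\mathbb R}^n$ whenever $d_i<n$. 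The actual proof expands $\det\bigl(\sum_ic_iP_iM_iP_i\bigr)$ by the Cauchy--Binet formula, observes that setting all $M_i=I_{F_i}$ exhibits the resulting coefficients as a probability distribution (this is where $\sum_ic_iP_i=I_n$ enters), and concludes by the AM--GM inequality term by term. Without this lemma both halves of your argument collapse, so you should either prove it via Cauchy--Binet or cite it explicitly; everything else in the proposal is sound modulo the standard regularity caveats you acknowledge.
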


In the statement above, $\int^{\ast }$ stands for the outer integral and in the right hand side
the integral on $F_i$ is with respect to the Lebesgue measure on $F_i$ which is compatible to the
given Euclidean structure.

\smallskip

\begin{proof}[Proof of Theorem \ref{th:general-uc-1}] Let $d_i={\rm dim}(F_i)$ and note that
\begin{equation*}ns={\rm tr}(sI_n)=\sum_{i=1}^rc_i\cdot {\rm tr}(P_i)=c_1d_1+\cdots +c_rd_r.\end{equation*}
Given a compact subset $K$ of ${\mathbb R}^n$ we define $f_i:F_i\to [0,\infty )$ by
$f_i={\mathbf 1}_{P_i(K)}$. Note that if $x\in K$ then $f_i(P_ix)=1$ for all $i=1,\ldots ,r$. Therefore,
\begin{equation*}{\mathbf 1}_K(x)\ls \prod_{i=1}^rf_i^{\frac{c_i}{s}}(P_ix)\end{equation*}
for all $x\in {\mathbb R}^n$. From Theorem \ref{th:barthe} we get
\begin{align*}
|K| =\int_{{\mathbb R}^n}{\mathbf 1}_K(x)\,dx \ls \int_{{\mathbb R}^n}\prod_{i=1}^rf_i^{\frac{c_i}{s}}(P_ix)\,dx
\ls \prod_{i=1}^r\left (\int_{F_i}f_i\right )^{\frac{c_i}{s}}=\prod_{i=1}^r|P_i(K)|^{\frac{c_i}{s}},
\end{align*}
which shows that $|K|^s\ls\prod_{i=1}^r|P_i(K)|^{c_i}$ as claimed.\end{proof}

\begin{application}[Bollob\'{a}s-Thomason]\label{appl:Bollobas-Thomason}\rm It is not hard to see that the Bollob\'{a}s-Thomason inequality may be proved in the same way. Note that if $(\sigma_1,\ldots ,\sigma_r)$ is
an $s$-uniform cover of $[n]$ then the projections $P_i:=P_{F_{\sigma_i}}$ satisfy
\begin{equation*}sI_n=\sum_{i=1}^rP_i.\end{equation*}
Therefore, for any compact subset $K$ of ${\mathbb R}^n$ we may apply Theorem \ref{th:general-uc-1} with $c_1=\cdots =c_r=1$
to get
\begin{equation*}|K|^s \ls\prod_{i=1}^r|P_i(K)|.
\end{equation*}
which is exactly the statement of Theorem \ref{th:BT}.

As a special case of Theorem \ref{th:general-uc-1} we also obtain the following inequality of Bollob\'{a}s and Thomason \cite{Bollobas-Thomason-1995}.
Let $\mathcal{C}$ be a finite collection of subsets of $[n]$, which is not necessarily a uniform cover. Suppose that to each $\sigma\in\mathcal{C}$ we associate
a positive real weight $w(\sigma )$ in such a way that, for each $i\in [n]$, $\sum\{w(\sigma ):\, i\in \sigma\in\mathcal{C}\}=1.$ Then,
it is clear that
\begin{equation*}I_n=\sum_{\sigma\in \mathcal{C}}w(\sigma )P_{F_{\sigma }},\end{equation*}
and Theorem \ref{th:general-uc-1} shows that
\begin{equation*}|K|\ls\prod_{\sigma\in\mathcal{C}}|P_{F_{\sigma }}(K)|^{w(\sigma )}.\end{equation*}
\end{application}

\begin{application}[Ball's inequality]\label{appl:Ball}\rm Let $u_1,\ldots ,u_m$ be unit vectors in ${\mathbb R}^n$ and
$c_1,\ldots ,c_m$ be positive real numbers such that John's condition
\begin{equation*}I_n=\sum_{i=1}^mc_iu_i\otimes u_i\end{equation*}
is satisfied. Using the one-dimensional geometric Brascamp--Lieb inequality,
Ball proved in \cite{Ball-1991} that for every centered convex body $K$ in ${\mathbb R}^n$,
\begin{equation}\label{eq:LW-ball}|K|^{n-1}\ls\prod_{i=1}^m|P_{u_i^{\perp }}(K)|^{c_i}.\end{equation}
The equality cases are the same with the ones in the Loomis--Whitney inequality.
Let us briefly explain how Theorem \ref{th:general-uc-1} implies \eqref{eq:LW-ball}. We observe that if $P_i=P_{u_i^{\perp }}$
then $u_i\otimes u_i=I_n-P_i$, and hence John's condition may be written as $I_n=\sum_{i=1}^mc_i(I_n-P_i)$, which implies that
\begin{equation}\label{eq:LW-ball-1}(n-1)I_n=\sum_{i=1}^mc_iP_i,\end{equation}
if we take into account the fact that $\sum_{i=1}^mc_i=n$. Then, given a (more generally) compact subset $K$ of ${\mathbb R}^n$
we may apply Theorem \ref{th:general-uc-1} with $s=n-1$ to get
\begin{equation}\label{eq:LW-ball-2}|K|^{n-1}\ls\prod_{i=1}^m|P_{u_i^{\perp }}(K)|^{c_i}.\end{equation}
\end{application}

\section{Dual Bollob\'{a}s-Thomason inequality}\label{sec:dual-BT}

We start with a proof of a more general version of Theorem \ref{th:dual-functional}. Recall that ${\cal F}({\mathbb R}^n)$ denotes
the class of log-concave integrable functions $f:{\mathbb R}^n\to [0,\infty )$.

\begin{theorem}\label{th:dual-1}Let $f\in {\cal F}({\mathbb R}^n)$ with $f(0)=1$ and $F_1,\ldots ,F_r$ be subspaces of ${\mathbb R}^n$
that form an $s$-uniform cover of ${\mathbb R}^n$ with weights $c_1,\ldots ,c_r>0$. Then,
\begin{equation*}n^n\int_{{\mathbb R}^n}f(y)^ndy\gr\prod_{i=1}^r\left (\int_{F_i}f(x_i)dx_i\right )^{c_i/s}.\end{equation*}
\end{theorem}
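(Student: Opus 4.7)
The plan is to apply Barthe's reverse inequality \eqref{eq:barthe-2} to the restrictions $f_i:=f|_{F_i}$, and then to dominate the resulting supremum pointwise by $f(y/n)^n$ using the log-concavity of $f$ and the normalization $f(0)=1$. As a first step, I would rescale the uniform cover: dividing $sI_n=\sum_{i=1}^rc_iP_i$ by $s$ yields $I_n=\sum_{i=1}^r\tilde c_iP_i$ with $\tilde c_i=c_i/s$, which puts us in the exact setting of Theorem~\ref{th:barthe}. Taking the trace of both sides gives $n=\sum_{i=1}^r\tilde c_i d_i$, where $d_i=\dim F_i$; since we may assume $d_i\gr 1$ for each $i$, this at once yields the crucial slack estimate $\sum_{i=1}^r\tilde c_i\ls n$.

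With this in hand, I would invoke \eqref{eq:barthe-2} with $f_i=f|_{F_i}$ to obtain
$$\int_{\mathbb R^n}^{\ast}\sup\Bigl\{\prod_{i=1}^rf(x_i)^{\tilde c_i}:y=\sum_{i=1}^r\tilde c_ix_i,\ x_i\in F_i\Bigr\}\,dy\gr\prod_{i=1}^r\Bigl(\int_{F_i}f\Bigr)^{c_i/s}.$$
The core of the argument is then the pointwise bound $f(y/n)^n\gr\prod_{i=1}^rf(x_i)^{\tilde c_i}$ whenever $y=\sum\tilde c_ix_i$ with $x_i\in F_i$. For this, the slack inequality $\sum\tilde c_i\ls n$ allows us to write
$$\frac{y}{n}=\sum_{i=1}^r\frac{\tilde c_i}{n}\,x_i+\Bigl(1-\sum_{i=1}^r\frac{\tilde c_i}{n}\Bigr)\cdot 0$$
as a genuine convex combination of the points $x_1,\ldots,x_r$ and $0$. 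The log-concavity of $f$ together with $f(0)=1$ then gives $f(y/n)\gr\prod_i f(x_i)^{\tilde c_i/n}$, and raising to the $n$-th power delivers the claim.

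To conclude, since $y\mapsto f(y/n)^n$ is measurable and dominates the supremum on the left-hand side of the Barthe inequality, we get
$$\int_{\mathbb R^n}f(y/n)^n\,dy\gr\prod_{i=1}^r\Bigl(\int_{F_i}f\Bigr)^{c_i/s},$$
and the change of variable $y=nz$ converts the left-hand side into $n^n\int_{\mathbb R^n}f(z)^n\,dz$, which is exactly the claimed inequality. I do not foresee any serious obstacle: the whole argument rests on spotting the correct scaling factor $n$, after which the trace identity $\sum\tilde c_id_i=n$ supplies precisely the slack needed so that $y/n$ becomes a convex combination involving the origin, at which point log-concavity together with $f(0)=1$ takes care of the rest.
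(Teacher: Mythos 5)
Your proof is correct and uses essentially the same approach as the paper: apply Barthe's reverse Brascamp--Lieb inequality after establishing the pointwise bound $f(y/n)^n\gr\prod_{i}f(x_i)^{c_i/s}$ via log-concavity and the normalization $f(0)=1$. You set up the required convex combination in a single step (treating the origin directly as a slack point), whereas the paper does it in two steps (first combining the rescaled points $x_i/d_i$ with weights $\tfrac{c_id_i}{sn}$ summing to $1$, then bounding $f(x_i/d_i)\gr f(x_i)^{1/d_i}$); composing the paper's two steps collapses to exactly your decomposition.
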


\begin{proof}Our assumption $I_n=\sum_{i=1}^r\frac{c_i}{s}P_{F_i}$ implies that
\begin{equation*}ns={\rm tr}(sI_n)=\sum_{i=1}^rc_i\cdot {\rm tr}(P_{F_i})=\sum_{i=1}^rc_id_i,\end{equation*}
where $d_i={\rm dim}(F_i)$. Let $z\in {\mathbb R}^n$ and $x_i\in F_i$, $i\in [r]$ such that $z=\sum_{i=1}^r\frac{c_i}{s}x_i$.
Then,
\begin{equation*}\frac{z}{n}=\sum_{i=1}^r\frac{c_id_i}{sn}\cdot\frac{x_i}{d_i},\end{equation*}
and since $f\in {\cal F}({\mathbb R}^n)$ and $\sum_{i=1}^r\frac{c_id_i}{sn}=1$ we have
\begin{equation*}f(z/n)\gr \prod_{i=1}^rf(x_i/d_i)^{\frac{c_id_i}{ns}}.\end{equation*}
Since $f(0)=1$, for every $i\in [r]$ we see that $f(x_i/d_i)\gr f(x_i)^{1/d_i}f(0)^{1-1/d_i}=f(x_i)^{1/d_i}$. It follows
that
\begin{equation*}f(z/n)\gr \prod_{i=1}^rf(x_i)^{\frac{1}{d_i}\cdot\frac{c_id_i}{ns}}=\prod_{i=1}^rf(x_i)^{\frac{c_i}{ns}},\end{equation*}
and hence
\begin{equation*}f(z/n)^n\gr\prod_{i=1}^rf(x_i)^{c_i/s}.\end{equation*}
This shows that
\begin{equation*}f(z/n)^n\gr\sup\left\{\prod_{i=1}^rf(x_i)^{c_i/s}:z=\sum_{i=1}^r\frac{c_i}{s}x_i,x_i\in F_i\right\}.\end{equation*}
Then, by the multidimensional reverse Brascamp-Lieb inequality \eqref{eq:barthe-2} we have that
\begin{align*}\int_{{\mathbb R}^n}f(z/n)^ndz &\gr \int^{\ast }_{{\mathbb R}^n}\sup\left\{\prod_{i=1}^rf(x_i)^{c_i/s}:
z=\sum_{i=1}^r\frac{c_i}{s}x_i,x_i\in F_i\right\}\,dz \\
&\gr\prod_{i=1}^r\left (\int_{F_i}f(x_i)dx_i\right )^{c_i/s}.\end{align*}
Making the change of variables $y=z/n$ we conclude the proof. \end{proof}

\smallskip

Out main geometric application of Theorem \ref{th:dual-1} is the next general uniform cover inequality for sections of a convex body.

\begin{theorem}\label{th:dual-2}Let $K$ be a convex body in ${\mathbb R}^n$ with $0\in {\rm int}(K)$ and $F_1,\ldots ,F_r$ be subspaces of ${\mathbb R}^n$,
with ${\rm dim}(F_i)=d_i$, that form an $s$-uniform cover of ${\mathbb R}^n$ with weights $c_1,\ldots ,c_r>0$. Then,
$$|K|^s\gr\frac{1}{(n!)^s}\prod_{i=1}^r(d_i!)^{c_i}\prod_{i=1}^r|K\cap F_i|^{c_i}.$$
\end{theorem}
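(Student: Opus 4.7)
The plan is to apply Theorem \ref{th:dual-1} to the exponential of the Minkowski functional of $K$. Let $\|\cdot\|_K$ denote the gauge of $K$ and set
$$f(x)=e^{-\|x\|_K}.$$
Since $\|\cdot\|_K$ is convex and vanishes at the origin, $f$ is integrable and log-concave with $f(0)=1$, so $f\in {\cal F}({\mathbb R}^n)$ and Theorem \ref{th:dual-1} is applicable.

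The two integrals that appear will be evaluated with the standard polar-coordinate identity for the gauge,
$$\int_{{\mathbb R}^n}g(\|x\|_K)\,dx=n|K|\int_0^\infty g(t)t^{n-1}\,dt,$$
valid for any convex body $K$ with $0\in {\rm int}(K)$. Taking $g(t)=e^{-nt}$ and performing the change of variables $u=nt$ yields $\int_{{\mathbb R}^n}f(y)^n\,dy=n!\,|K|/n^n$, hence
$$n^n\int_{{\mathbb R}^n}f(y)^n\,dy=n!\,|K|.$$
For $x_i\in F_i$ the gauge satisfies $\|x_i\|_K=\|x_i\|_{K\cap F_i}$ (where $K\cap F_i$ is regarded as a convex body in $F_i$ with the inherited Euclidean structure), which is immediate from the definition of the Minkowski functional. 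Applying the same polar-coordinate identity inside $F_i$ with $g(t)=e^{-t}$ then gives
$$\int_{F_i}f(x_i)\,dx_i=d_i!\,|K\cap F_i|.$$

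Substituting these two evaluations into the conclusion of Theorem \ref{th:dual-1} produces
$$n!\,|K|\gr\prod_{i=1}^r\bigl(d_i!\,|K\cap F_i|\bigr)^{c_i/s},$$
and raising both sides to the $s$-th power delivers the target inequality. There is essentially no obstacle beyond selecting the right test function: the functional inequality of Theorem \ref{th:dual-1} already carries the geometric content, and the choice $f=e^{-\|\cdot\|_K}$ is exactly what converts the log-concave integrals into the factorial constants $n!$ and $d_i!$ demanded by the statement. The sharpness for the cross-polytope in Theorem \ref{th:dual-BT} is then inherited from the equality case of Barthe's reverse Brascamp--Lieb inequality, as expected.
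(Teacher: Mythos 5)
Your proof is correct and follows the same route as the paper: apply Theorem~\ref{th:dual-1} to $f=e^{-\|\cdot\|_K}$ and evaluate both sides via the standard identity $\int_F e^{-\|x\|_L}\,dx=(\dim F)!\,|L|$ (which the paper invokes directly and you rederive by polar integration), giving $n!\,|K|\gr\prod_i(d_i!\,|K\cap F_i|)^{c_i/s}$. The only difference is cosmetic---the paper rescales via $\|y\|_{\frac1n K}$ rather than substituting $u=nt$ in the radial integral---and your aside about sharpness is not part of the proof and would require a separate verification.
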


\begin{proof}We apply Theorem \ref{th:dual-1} for the function $f(y)=e^{-\|y\|_K}$, where $\|y\|_K:=\min\{t>0:y\in tK\}$ is the Minkowski
functional of $K$. Note that $f\in {\cal F}({\mathbb R}^n)$ and $f(0)=1$. We have
\begin{align*}n^n\int_{{\mathbb R}^n}f(y)^ndy &= n^n\int_{{\mathbb R}^n}e^{-n\|y\|_K}dy =n^n\int_{{\mathbb R}^n}e^{-\|y\|_{\frac{1}{n}K}}dy\\
&= n^n\,n!\left | \frac{1}{n}K\right |=n!\,|K|,
\end{align*}
and for every $i\in [r]$ we have
\begin{equation*}\int_{F_i}f(x_i)dx_i=\int_{F_i}e^{-\|x_i\|_K}dx_i=\int_{F_i}e^{-\|x_i\|_{K\cap F_i}}dx_i=d_i!\,|K\cap F_i|.\end{equation*}
It follows that
\begin{equation*}n!|K|\gr\prod_{i=1}^r\big (d_i!\,|K\cap F_i|\big )^{c_i/s}=\prod_{i=1}^r(d_i!)^{c_i/s}\prod_{i=1}^r|K\cap F_i|^{c_i/s},\end{equation*}
and the theorem follows.\end{proof}

\begin{application}[dual Bollob\'{a}s--Thomason]\label{appl:dual-BT-1}\rm Theorem \ref{th:dual-2} has several
straightforward applications. First, let $(\sigma_1,\ldots ,\sigma_r)$ be an $s$-uniform cover
of $[n]$. Setting $F_i=F_{\sigma_i}={\rm span}(\{e_j:j\in\sigma_i\})$, $i\in [r]$, we have $sI_n=\sum_{i=1}^rP_{F_i}$.
Thus, we obtain the dual Bollob\'{a}s-Thomason inequality of Theorem \ref{th:dual-BT}:
If $K$ is a convex body in ${\mathbb R}^n$ with $0\in {\rm int}(K)$ and $(\sigma_1,\ldots ,\sigma_r)$ is an $s$-uniform cover
of $[n]$ then
\begin{equation*}|K|^s\gr\frac{1}{(n!)^s}\prod_{i=1}^r|\sigma_i|!\prod_{i=1}^r|K\cap F_i|.\end{equation*}
In the particular case $F_i=e_i^{\perp }$, $i\in [n]$ we have $(n-1)I_n=\sum_{i=1}^nP_{e_i^{\perp }}$, and applying Theorem \ref{th:dual-BT}
with $s=n-1$ and $|\sigma_i|={\rm dim}(F_i)=n-1$ we recover Meyer's inequality
\begin{equation*}|K|^{n-1}\gr\frac{n!}{n^n}\prod_{i=1}^n|K\cap e_i^{\perp }|\end{equation*}
for any convex body $K$ in ${\mathbb R}^n$ with $0\in {\rm int}(K)$, because \begin{equation*}\frac{1}{(n!)^{n-1}}\prod_{i=1}^n|\sigma_i|!=\frac{1}{(n!)^{n-1}}\prod_{i=1}^n(n-1)!=\frac{[(n-1)!]^n}{(n!)^{n-1}}=
\frac{(n-1)!}{n^{n-1}}=\frac{n!}{n^n}.\end{equation*}
\end{application}

Theorem \ref{th:dual-BT-equivalent} can be obtained from Theorem \ref{th:dual-BT} by an argument which is basically the same with the
one used by Bollob\'{a}s and Thomason for the proof of \eqref{eq:BT-equivalent}. In what follows, we say that a uniform cover of
$[n]$ is irreducible if it cannot be written as a disjoint union of two other uniform covers of $[n]$. In \cite{Bollobas-Thomason-1995}
it is shown that the number of irreducible uniform covers of $[n]$ is finite.

\medskip

\begin{proof}[Proof of Theorem~\ref{th:dual-BT-equivalent}] Let $K$ be a convex body in ${\mathbb R}^n$ with
$0\in {\rm int}(K)$. Theorem \ref{th:dual-BT} states that for every integer $s\gr 1$ and any non-trivial irreducible
$s$-uniform cover $(\sigma_1,\ldots ,\sigma_r)$ of $[n]$ we have that $(n!|K|)^s\gr \prod_{j=1}^r\big (|\sigma_j|!\,|K\cap F_{\sigma_j}|\big )$.
Moreover, applying Theorem \ref{th:dual-BT} for the $1$-uniform cover $(\{i\},i\in\tau )$ of $\tau\subseteq [n]$ we see that
$|\tau |!|K\cap F_{\tau }|\gr\prod_{i\in\tau }|K\cap F_{\{i\}}|$. Since there are finitely many irreducible uniform covers of $[n]$,
we have a finite number of inequalities as above, satisfied by the elements of the set $\{ |\sigma |!|K\cap F_{\sigma }|:\sigma\subseteq [n]\}$.

Let $\{t_{\sigma }:\sigma\subseteq [n]\}$ be a set of positive reals with $t_{\sigma }\gr |\sigma |!|K\cap F_{\sigma }|$ and
$t_{[n]}=n!|K|$, which are maximal with respect to satisfying all the above inequalities if we replace $|\sigma |!|K\cap F_{\sigma }|$
by $t_{\sigma }$ for all $\sigma\subseteq [n]$. Then, we know that $\prod_{j=1}^rt_{\sigma_j}\ls (n!|K|)^s$ for every (not necessarily
irreducible) $s$-uniform cover $(\sigma_1,\ldots ,\sigma_r)$ of $[n]$.

Since $t_{\{i\}}$, $i\in [n]$, are maximal, we see that for every $i\in [n]$ we can find an inequality involving $t_{\{i\}}$ which
is equality. If this inequality is of the first kind then there exists an $s_i$-uniform cover $\overline{\sigma }(i)=(\sigma_1,\ldots ,\sigma_r)$ of $[n]$ with
$\sigma_j=\{i\}$ for some $j$, such that $(n!|K|)^{s_i}=\prod_{j=1}^rt_{\sigma_j}$. The same is true if the inequality is of the second kind,
i.e. if we have an equality of the type $\prod_{l\in\tau }t_{\{l\}}=t_{\tau }$ for some $\tau\subseteq [n]$ with $i\in\tau $.
Because, by the maximality of $t_{\tau }$ we can find an $s_i$-uniform cover $(\sigma_1,\ldots ,\sigma_r)$ of $[n]$ such that $\tau=\sigma_{j_0}$
for some $j_0$, and then $\overline{\sigma }(i):=(\sigma_j, j\neq j_0)\cup (\{i\}:i\in\tau )$ is again an $s_i$-uniform cover of $[n]$.

Now, we define $\overline{\sigma }=\bigcup_{i=1}^n\overline{\sigma }(i)$ and $s=\sum_{i=1}^ns_i$. Then, $\overline{\sigma }$ is an $s$-uniform cover
of $[n]$, we have $\{i\}\in\overline{\sigma }$ for all $i=1,\ldots ,n$ and
\begin{equation}\label{eq:dual-BT-equivalent-1}\prod_{\sigma\in\overline{\sigma }}t_{\sigma }=(n!|K|)^s.\end{equation}
Since $\overline{\sigma }^{\prime }:=\overline{\sigma }\setminus (\{i\}:i\in [n])$ is an $(s-1)$-unform cover of $[n]$ we must have
\begin{equation}\label{eq:dual-BT-equivalent-2}\prod_{\sigma\in\overline{\sigma }^{\prime }}t_{\sigma }\ls (n!|K|)^{s-1}.\end{equation}
Combining \eqref{eq:dual-BT-equivalent-1} and \eqref{eq:dual-BT-equivalent-2} we see that $\prod_{i=1}^nt_{\{i\}}\gr n!|K|$.
On the other hand, $(\{i\}:i\in [n])$ is a $1$-uniform cover of $[n]$, and hence the reverse inequality is also true. Therefore,
\begin{equation}\label{eq:dual-BT-equivalent-3}\prod_{i=1}^nt_{\{i\}}= n!|K|.\end{equation}
Now, let $\tau\subseteq [n]$ and consider the $1$-uniform cover $\{ \tau\}\cup (\{i\}:i\notin\tau )$ of $[n]$. Using \eqref{eq:dual-BT-equivalent-3}
and the assumption that $t_{\tau }\gr \prod_{i\in\tau }t_{\{i\}}$ we have
$$n!|K|\gr t_{\tau }\cdot\prod_{i\notin \tau }t_{\{i\}}\gr \prod_{i\in\tau }t_{\{i\}}\cdot\prod_{i\notin\tau }t_{\{i\}}=
\prod_{i=1}^nt_{\{i\}}=n!|K|,$$
which implies that
\begin{equation}\label{eq:dual-BT-equivalent-4}t_{\tau }=\prod_{i\in\tau }t_{\{i\}}\end{equation}
for every $\tau\subseteq [n]$. The last set of equalities shows that if we consider the box $B=\prod_{i=1}^n[0,t_{\{i\}}]$ then we have
$|B|=\prod_{i=1}^nt_{\{i\}}=n!|K|$ and $|B\cap F_{\sigma }|=\prod_{i\in\sigma }t_{\{i\}}=t_{\sigma }\gr |\sigma |!|K\cap F_{\sigma }|$
for every $\sigma\subseteq [n]$. Then, if we set $\lambda_i=t_{\{i\}}/2$ and $C={\rm conv}(\{\pm \lambda_1e_1,\ldots ,\pm\lambda_ne_n\})$
we observe that $|C|=|K|$ and $|C\cap F_{\sigma }|\gr |K\cap F_{\sigma }|$ for every $\sigma\subseteq [n]$.
\end{proof}

\begin{application}[dual Ball's inequality]\label{appl:dual-BT-2}\rm Li and Huang proved in \cite{Li-Huang-2016} that for every
centered convex body $K$ in ${\mathbb R}^n$ and every even isotropic measure $\nu $ on $S^{n-1}$ one has
\begin{equation}\label{eq:li-huang}|K|^{n-1}\gr\frac{n!}{n^n}\exp\left (\int_{S^{n-1}}\log |K\cap u^{\perp }|\,d\nu (u)\right )\end{equation}
and they determined the equality cases. Their very interesting argument employs the continuous version of the Ball-Barthe inequality, due
to Lutwak, Yang and Zhang \cite{LYZ}, and a number of facts about the class of polar $L_p$-centroid bodies.
In the particular case where $u_1,\ldots ,u_m$ are unit vectors in ${\mathbb R}^n$ and
$c_1,\ldots ,c_m$ are positive real numbers that satisfy John's condition, one gets
\begin{equation}\label{eq:gen-lw-2}|K|^{n-1}\gr \frac{n!}{n^n}\prod_{i=1}^m|K\cap u_i^{\perp }|^{c_i}.\end{equation}
The latter inequality (in fact in a more general form) is a direct consequence of Theorem \ref{th:dual-2}. Given a convex
body $K$ in ${\mathbb R}^n$ with $0\in {\rm int}(K)$, we consider the subspaces $F_i=u_i^{\perp }$, and
since ${\rm dim}(F_i)=n-1$ and the $F_i$'s form an $(n-1)$-uniform cover of ${\mathbb R}^n$ with weights $c_1,\ldots ,c_m>0$,
using also the fact that $\sum_{i=1}^mc_i=n$ we immediately get
\begin{align}\label{eq:gen-lw-3}|K|^{n-1} &\gr\frac{1}{(n!)^s}\prod_{i=1}^m((n-1)!)^{c_i}\prod_{i=1}^m|K\cap u_i^{\perp }|^{c_i}
=\frac{[(n-1)!]^n}{(n!)^{n-1}}\prod_{i=1}^m|K\cap u_i^{\perp }|^{c_i}\\
\nonumber &=\frac{n!}{n^n}\prod_{i=1}^m|K\cap u_i^{\perp }|^{c_i}.
\end{align}
We can now use an approximation argument of Barthe from \cite{Barthe-2004} to deduce \eqref{eq:li-huang} from \eqref{eq:gen-lw-3}.
We sketch the idea of the proof and refer to Barthe's article for more details. Recall that a Borel measure $\nu $ on $S^{n-1}$
is called isotropic if $I_n=\int_{S^{n-1}}u\otimes u\,d\nu (u)$.
The fact that the vectors $u_j$ and the weights $c_j$ satisfy \eqref{eq:gen-lw-2} is equivalent to saying that the discrete
measure $\overline{\nu }$ with $\overline{\nu }(\{u_j\})=c_j$ is isotropic, i.e. $I_n=\int_{S^{n-1}}u\otimes u\,d\overline{\nu }(u)$.
Also, since
\begin{equation*}\int_{S^{n-1}}\log |K\cap u^{\perp }|\,d\overline{\nu } (u)=\sum_{i=1}^mc_i\log |K\cap u_i^{\perp }|
=\log\left (\prod_{i=1}^m|K\cap u_i^{\perp }|^{c_i}\right ),\end{equation*}
we may write \eqref{eq:gen-lw-3} in the equivalent form
\begin{equation}\label{eq:li-huang-discrete}|K|^{n-1}\gr\frac{n!}{n^n}\exp\left (\int_{S^{n-1}}\log |K\cap u^{\perp }|\,d\overline{\nu } (u)\right ).\end{equation}
In other words, \eqref{eq:li-huang} holds true for any discrete isotropic measure on $S^{n-1}$.

Now, let $\nu $ be an isotropic Borel measure on $S^{n-1}$. For any $\varepsilon >0$ we consider a maximal
$\varepsilon $-net $N_{\varepsilon }$ in $S^{n-1}$ and a partition $(C_u)_{u\in N_{\varepsilon }}$ of $S^{n-1}$
into Borel sets $C_u\subseteq B(u,\varepsilon )$, where $B(u,\varepsilon )$ is the geodesic ball with center $u$ and radius $\varepsilon $.
Then, we consider the measure
\begin{equation*}\nu_{\varepsilon }=\sum_{u\in N_{\varepsilon }}\nu (C_u)\delta_u,\end{equation*}
where $\delta_u$ is the Dirac mass at $u$.
Note that, for any continuous function $f:S^{n-1}\rightarrow {\mathbb R}$ we have that
\begin{equation*}\int_{S^{n-1}}f(u)\,d\nu_{\varepsilon } \longrightarrow \int_{S^{n-1}}f(u)\,d\nu\end{equation*}
as $\varepsilon\to 0$. In other words, $\nu_{\varepsilon }\rightarrow\nu $ weakly as $\varepsilon\to 0$.
If $T_{\varepsilon}=\int_{S^{n-1}}u\otimes u\,d\nu_{\varepsilon }(u)$ then for the measure
$\mu_{\varepsilon }=\sum_{u\in N_{\varepsilon }}\nu_{\varepsilon }(u)\|T_{\varepsilon }^{-1/2}(u)\|_2^2\delta_{v(u)}$
where $v(u):=T_{\varepsilon }^{-1/2}(u)/\|T_{\varepsilon }^{-1/2}(u)\|_2$ we have
\begin{equation*}I_n=\int_{S^{n-1}}T_{\varepsilon }^{-1/2}(u)\otimes T_{\varepsilon }^{-1/2}(u)\,d\nu_{\varepsilon }(u)
=\int_{S^{n-1}}v\otimes v\,d\mu_{\varepsilon }(v).\end{equation*}
Since $\|T_{\varepsilon }-I_n\|_{\ell_2^n\to\ell_2^n}\ls c_1(\varepsilon )$
for some constant $c_1(\varepsilon )$ that tends to $0$ as $\varepsilon\to 0$, we can check that for any continuous function $f:S^{n-1}\rightarrow {\mathbb R}$
\begin{equation*}\int_{S^{n-1}}f(u)\,d\mu_{\varepsilon } \longrightarrow \int_{S^{n-1}}f(u)\,d\nu\end{equation*}
as $\varepsilon\to 0$. Applying \eqref{eq:li-huang-discrete} for the discrete isotropic measure $\mu_{\varepsilon }$ we have
\begin{equation*}|K|^{n-1} \gr\frac{n!}{n^n}\exp\left (\int_{S^{n-1}}\log |K\cap u^{\perp }|\,d\mu_{\varepsilon } (u)\right )
\longrightarrow \frac{n!}{n^n}\exp\left (\int_{S^{n-1}}\log |K\cap u^{\perp }|\,d\nu (u)\right ).\end{equation*}
This proves \eqref{eq:li-huang}.
\end{application}

\bigskip

\noindent {\bf Acknowledgements.} The author would like to thank Apostolos Giannopoulos and Franck Barthe for useful discussions. He
also acknowledges support by the Department of Mathematics through a University of Athens Special Account Research Grant.

\bigskip

\bigskip

\footnotesize
\bibliographystyle{amsplain}

\bigskip

\medskip

\thanks{\noindent {\bf Keywords:}  Convex bodies, volume of projections and sections, Loomis-Whitney inequality, uniform cover inequality.}

\smallskip

\thanks{\noindent {\bf 2010 MSC:} Primary 52A20; Secondary 52A23, 52A40, 46B06.}

\bigskip

\bigskip

\noindent \textsc{Dimitris-Marios \ Liakopoulos}: Department of
Mathematics, National and Kapodistrian University of Athens, Panepistimiopolis 157-84,
Athens, Greece.

\smallskip

\noindent \textit{E-mail:} \texttt{dimliako1@gmail.com}

\end{document}